\DeclareSymbolFontAlphabet{\mathbb}{AMSb}
\DeclareSymbolFontAlphabet{\mathbbl}{bbold}
\newtheorem{conj}{Conjecture}
\newtheorem{defi}[conj]{\bf Definition}
\newtheorem{prop}[conj]{\bf Proposition}
\newtheorem{rem}[conj]{\bf Remark}
\newtheorem{example}[conj]{\bf Example}
\newtheorem{assumpt}[conj]{\bf Assumption}
\def\saveenum{\xdef\@savedenum{\the\c@enumi\relax}}
\def\resetenum{\global\c@enumi\@savedenum}
\def\to{\rightarrow}
\def\sign{{\rm sign}}
\def\Bc{{\mathcal B}}
\def\Cc{{\mathcal C}}
\def\Fc{{\mathcal F}}
\def\Gc{{\mathcal G}}
\def\Hc{{\mathcal H}}
\def\Lc{{\mathcal L}}
\def\Tc{{\mathcal T}}
\def\Vc{{\mathcal V}}
\def\Zc{{\mathcal Z}}
\def\Rb{{\mathbb R}}
\def\Pb{{\mathbb P}}
\def\Cscr{{ \mathscr C}}
\def\Gscr{{ \mathscr G}}
\def\Vscr{{ \mathscr V}}
\def\contiguous{\triangleleft\kern-.20em\triangleright}
\def\implies{\Longrightarrow}
\def\ii{{\mathbf 1}( }
\def\0{{\mathbf 0} }
\def\1{{\mathbf 1} }
\def\x{ {\bf x}}
\def\x{ {\bf x}}
\def\y{ {\bf y}}
\def\z{ {\bf z}}
\def\u{ {\bf u}}
\def\T{ {\bf T}}
\def\X{ {\bf X}}
\def\Y{ {\bf Y}}
\def\Z{ {\bf Z}}
\newcommand{\Card}{{\mathrm{Card}}}
\newcommand{\SetCopulas}{{\Cscr}}
\newcommand{\SetCopulasCond}{{\SetCopulas_{\textnormal{cond}}}}
\begin{document}

\title[Measures of non-simplifyingness
for conditional copulas and vines]{Measures of non-simplifyingness for conditional copulas and vines}


\author*{\fnm{Alexis} \sur{Derumigny}}\email{a.f.f.derumigny@tudelft.nl}

\affil{\orgdiv{Department of Applied Mathematics}, \orgname{Delft University of Technology}, \orgaddress{\street{Mekelweg 4}, \postcode{2628 CD}, \city{Delft}, \country{Netherlands}}}


\abstract{
In copula modeling, the simplifying assumption has recently been the object of much interest.
Although it is very useful to reduce the computational burden, it remains far from obvious whether it is actually satisfied in practice.
We propose a theoretical framework which aims at giving a precise meaning to the following question: how non-simplified or close to be simplified is a given conditional copula?
For this, we propose a new framework centered at the notion of measure of non-constantness.
Then we discuss generalizations of the simplifying assumption to the case where the conditional marginal distributions may not be continuous, and corresponding measures of non-simplifyingness in this case.
The simplifying assumption is of particular importance for vine copula models, and we therefore propose a notion of measure of non-simplifyingness of a given copula for a particular vine structure, as well as different scores measuring how non-simplified such a vine decompositions would be for a general vine.
Finally, we propose estimators for these measures of non-simplifyingness given an observed dataset.
A small simulation study shows the performance of a few estimators of these measures of non-simplifyingness.
}

\keywords{measure of non-constantness, non-constant functions, vine copula models, simplifying assumption, non-simplified copulas, non-simplified vines.}

\pacs[MSC Classification]{62H05}

\maketitle

\section{Introduction}

In conditional copula modeling, the simplifying assumption has recently been the object of much interest, see e.g. \cite{DerumignyFermanian2017,mroz2021simplifying,nagler2024simplified} and references therein.
We consider two random vectors of interest $\X = (X_1, \dots, X_d)$ and $\Z = (Z_1, \dots, Z_p)$.
By Sklar's theorem, the conditional joint cumulative distribution function $F_{\X | \Z}(\, \cdot \, | \z)$ of $\X$ given $\Z = \z$ can be decomposed as 
\begin{align*}
    F_{\X | \Z}(\x | \z) = C_{\X | \Z}
    \Big( F_{X_1 | \Z} (x_1 | \z), \dots,
    F_{X_d | \Z} (x_d | \z)
    \, \Big| \, \Z = \z \Big),
\end{align*}
for every $\z \in \Rb^p$,
where $F_{X_1 | \Z}, \dots, F_{X_d | \Z}$ are the conditional marginal cumulative distribution functions of respectively $X_1, \dots, X_d$ given $\Z$.
The simplifying assumption corresponds to the statement that the conditional copula
$C_{\X | \Z}( \, \cdot \, , \dots, \, \cdot \, | \, \Z = \z)$
does not depend on the value $\z$ of the conditioning vector.

\medskip

Although the simplifying assumption is very useful to reduce the computational burden\footnote{since the statistician only needs to estimate \textit{one} copula instead of an \textit{infinite amount} of copulas. Indeed, if the simplifying assumption is not satisfied, the statistician needs to specify and estimate a potentially different copula for each and every value $\z$ of the conditioning variable $\Z$.},
it remains far from obvious whether it is actually satisfied in practice.
As some authors -- see e.g. \cite{spanhel2019simplified,nagler2024simplified} mention, it is unrealistic to imagine that the simplifying assumption would be satisfied strictly speaking.
Nevertheless, if the simplifying assumption is somehow ``close to be satisfied but not exactly'', it may still be useful to assume it.
From a theoretical point-of-view, it then becomes necessary to define what the previous sentence really means.
How can we define what ``close to be simplified'' rigorously means, in mathematical terms?
The goal of this paper is to answer this question, by proposing a new concept of \emph{measure of non-simplifyingness}.

\medskip

Tests of the simplifying assumptions have already been developed, see e.g. \cite{acar2013statistical,DerumignyFermanian2017,gijbels2017nonparametric,gijbels2017score,kurz2022testing}, but they are very strict and, for a sample size large enough, they will detect any deviation from the simplifying assumption no matter how small it is.
This is classical in mathematical statistics: in usual situations, the power of a test will tend to $1$ under any fixed alternative.

\medskip

This paper starts by introducing a more general concept of ``measure of non-constantness'' (Section~\ref{sec:measure_nonconstantness}).
These are operators that measure how non-constant a function is. 
In a similar way, we present the new concept of ``measure of non-simplifyingness'' for conditional copulas in Section~\ref{sec:measure_nonsimplifyingness_condcop}.
In Section~\ref{sec:measure_nonsimplifyingness_vines}, we present extensions to vine copula models, to define non-simplifyingness scores.
Statistical inference of all these measures is discussed in Section~\ref{sec:statistical_inference}.
A small simulation study is presented in Section~\ref{sec:simulation_study}.
A list of all the proposed measures and their estimators is given in Appendix~\ref{sec:list_new_measures}.

\bigskip

\noindent
\textbf{Notation.} $\Card(A)$ denotes the cardinal of a set $A$. For two sets $A$ and $B$, we denote by $\Fc(A, B)$ the set of functions from $A$ to $B$.

\section{Measures of non-constantness}
\label{sec:measure_nonconstantness}

Let $\Zc$ be a set, let $E$ be a real vector space, and let $\Gc$ be a subset of $\Fc(\Zc, E)$, the set of functions from $\Zc$ to $E$.

\begin{defi}
    We say that a function $\psi: \, \Gc \to [0, +\infty]$
    is a \emph{measure of non-constantness} if it satisfies the following conditions:
    \begin{enumerate}[(i)]
        \item \emph{[Identification of constant functions]} For any function $f \in \Gc$, $\psi(f) = 0$ if and only if $f$ is a constant function.

        \item \emph{[Invariance by translation]}
        For any function $f \in \Gc$, for any constant $e \in E$ such that $f + e \in \Gc$,
        $\psi(f + e) = \psi(f)$, where $f + e$ denotes the function $x \mapsto f(x) + e$.

        \item \emph{[Sub-additivity]}
        For any functions $f, g \in \Gc$ such that 
        $f + g \in \Gc$,
        $\psi(f + g) \leq \psi(f) + \psi(g)$.

        \item  \emph{[Homogeneity]}
        For any function $f \in \Gc$, for any real $a \in \Rb$ such that 
        $a \times f \in \Gc$, we have
        $\psi(af) = |a| \times \psi(f)$, with the convention $0 \times \infty = 0$.
    \end{enumerate}
    A function $\psi$ satisfying (ii), (iii), (iv) and
    \begin{itemize}
        \item[(i')] For any constant function $f \in \Gc$, $\psi(f) = 0$.
    \end{itemize}
    is called a \emph{pseudo-measure of non-constantness}.
\end{defi}

Axiom \emph{(i)} is natural in the sense that a measure of non-constantness should be $0$ when the function $f$ is constant (because then there are no variations of $f$). Ideally, this should be the case only when $f$ is constant, but this may be too constraining sometimes. By analogy with the concepts of norm and pseudo-norm, we give a relaxed version \emph{(i')} of \emph{(i)}, and call the corresponding object a pseudo-measure of non-constantness.

\medskip

Axiom \emph{(ii)} means that the measure is invariant upon addition of a constant, since this should not change the way the function $f$ is non-constant.
The two last axioms \emph{(iii)} and \emph{(iv)} are inspired from the definition of a norm.
Indeed, the function $f + g$ should not vary more than both $f$ and $g$, considered separately.
Finally, multiplying a function $f$ by a constant factor should only have a multiplicative effect on the measure of non-constantness of $f$.

\medskip

A first natural idea to construct measures of non-constantness is to rely on the norm of the non-constant part of a function.
This is detailed in the following example.

\begin{example}
\label{ex:measure-non-const_pseudo-norm}
    Let $\textnormal{Const}$ be the set of constant functions from $\Zc$ to $E$. Then $\textnormal{Const}$ is a subspace of $\Fc(\Zc, E)$. Let $\widetilde\Gc$ be a space of $\Fc(\Zc, E)$ linearly independent of $\textnormal{Const}$, and let $\Gc := \textnormal{Const} \oplus \widetilde\Gc$.
    Then every (pseudo-)norm $\| \cdot \|_{\widetilde\Gc}$ on $\widetilde\Gc$ induces a (pseudo-)measure of non-constantness on $\Gc$ by $\psi(\widetilde g + c) := \| \widetilde g \|_{\widetilde\Gc}$.
\end{example}

\begin{example}
\label{ex:measure-non-const_trivial}
    The discrete map $f \mapsto \1\{f \notin \textnormal{Const}\}$ is always a measure of non-constantness, but it is the least useful since it assigns $1$ to all non-constant functions without any distinction.
\end{example}

We now give several more applicable examples of ways on how to construct $\text{(pseudo-)measures}$ of non-constantness in the case where the vector space $E$ is equipped with a pseudo-norm $\| \cdot \|_E$.

\begin{example}
\label{ex:measure-non-const_Kolmogorov-Smirnov}
    First, we can define the Kolmogorov-Smirnov pseudo-measure of non-constantness by
    \begin{align*}
        \psi_{\text{KS}}(f)
        := \sup_{x, y \in \Zc} \| f(x) - f(y) \|_E.
    \end{align*}
    This is a measure of non-constantness whenever the pseudo-norm $\| \cdot \|_E$ is actually a norm.
    Moreover, fixing a given collection $z_1, \dots, z_n \in \Zc$,
    one can define other pseudo-measures of non-constantness such as $\sup_{i} \| f(z_i) - f(z_{i+1}) \|_E$,
    $\sup_{i,j} \| f(z_i) - f(z_j) \|_E$,
    or corresponding sum-type measures
    $\Sigma_{i} \| f(z_i) - f(z_{i+1}) \|_E$,
    $\Sigma_{i,j} \| f(z_i) - f(z_j) \|_E$.
    These will only be pseudo-measures of non-constantness, not measures of non-constantness
    (unless $\Zc = \{z_1, \dots, z_n\}$)
    but they are straightforward to implement.
\end{example}

\begin{example}
\label{ex:measure-non-const_integral-type}
    If $(\Zc, \Bc(\Zc), \mu)$ is a measured space in the context of the previous example, integral-type pseudo-measures of non-constantness become available.
    They can be defined as 
    \begin{align*}
        \psi(f)
        := \left( \iint \| f(x) - f(y) \|_E^s d\mu(x) d\mu(y) \right)^{1/s},
    \end{align*}
    for $s \in (1, +\infty)$.
    To avoid the double integral, it can be easier to fix a collection $z_1, \dots, z_n \in \Zc$, and to use instead the pseudo-measure
    \begin{align*}
        \psi(f)
        := \bigg( \sum_i
        \int \|f(z_i) - f(x) \|_E^s d\mu(x) \bigg)^{1/s}.
    \end{align*}
\end{example}

\begin{example}
\label{ex:measure-non-const_from-averaging}
    In many cases, there exist an averaging operator
    $\textnormal{ave}: \Gc \to E$ such that
    \begin{enumerate}[(i)]
        \item The mapping $\textnormal{ave}$ is linear.
        \item If $f$ is constant with a certain value $e \in E$, 
        then $\textnormal{ave}(f) = e$.
    \end{enumerate}
    For example $\textnormal{ave}(f) = \int f(z) d\mu(z)$ for a probability measure $\mu$ satisfies these conditions.
    If an averaging operator is available,
    pseudo-measures of non-constantness can be defined using the norm of the difference between $f$ and its average, by $\sup_z \| f(z) - \textnormal{ave}(f) \|_E$ or
    $\int_z \| f(z) - \textnormal{ave}(f) \|_E \, d\mu(z)$.
\end{example}

\begin{rem}
    All the previous examples can be generalized to pseudo-metrics $d_E$ which satisfy the translation-invariance condition $d_E(f+e, g+e) = d_E(f,g)$ for every $f,g \in \Gc, e \in E$ such that $f+e, g+e \in E$.
\end{rem}

\begin{rem}
\label{rem:structure_set_measures_non-constantness}
    Note that the set of measures of non-constantness is a convex cone: if $\psi_1$ and $\psi_2$ are measures of non-constantness, and $\alpha_1 > 0, \alpha_2 \geq 0$, then $\alpha_1 \psi_1 + \alpha_2 \psi_2$ is also a measure of non-constantness. Similarly, the set of pseudo-measures of non-constantness is a pointed convex cone (since it contains the zero function $\psi_0: f \mapsto 0$).
    As a consequence, new measures of non-constantness can be created by weighted combinations of existing measures of non-constantness. This can be useful in practice to combine different ways of measuring non-constantness together.
\end{rem}

It seems coherent that a measure of non-constantness of a function $f$ could be linked to the derivative of $f$.
We first present a general framework before defining the corresponding measure of non-constantness.
Let us assume that $\Zc$ is a connected open set in a linear topological set, and that $E$ is a locally convex linear topological space.
Let $\Gc$ be the space of Gâteaux-differentiable functions from $\Zc$ to $E$. 
Recall \cite{averbukh1967theory} that a function
$f: \, \Zc \to E$ is Gâteaux-differentiable
if there exists a linear operator $A =: f'(x)$ such that for
$x, h \in \Zc$,
$f(x + h) = f(x) + Ah + r(h)$,
with $r(th) / t \to 0$ for every $h$.
We know that $f \in \Gc$ is constant if and only if its Gâteaux-derivative is equal to zero at each point of $\Zc$, see e.g. \cite[Theorem 1.9 page 219]{averbukh1967theory}.

\begin{example}[Measures of non-constantness from derivatives]
    Therefore, $\psi(f) := \| f' \|$ is a measure of non-constantness, where $\| \, \cdot \, \|$ is a norm on the space $\Fc(\Zc, \Lc(\Zc, E)))$ of maps from $\Zc$ to the space $\Lc(\Zc, E)$ of linear operators from $\Zc$ to $E$.
    For example, if $\Zc \subset \Rb$ and $E = \Rb$, then $\| f' \|$ could be chosen as $\sup_{z \in \Zc} | f'(z) |$ or $\int_{z \in \Zc} | f'(z) | d\mu(z)$.
\label{example:measure_nonconstantness_derivatives}
\end{example}

\section{Measures of non-simplifyingness for conditional copulas}
\label{sec:measure_nonsimplifyingness_condcop}

\subsection{Framework}

Remember that $\X$ and $\Z$ are two random vectors, of respective dimensions $d$ and $p$.
Let us denote by $\SetCopulas$ the set of all $d$-dimensional copulas.
For a given point $\z \in \Rb^p$, one can define the conditional joint cumulative distribution function $F_{\X | \Z}( \, \cdot \, | \Z = \z)$ of $\X$ given $\Z = \z$.
Note that the margins of $F_{\X | \Z}( \, \cdot \, | \Z = \z)$ are the conditional cumulative distribution function $F_{X_1 | \Z}( \, \cdot \, | \Z = \z), \dots, F_{X_d | \Z}( \, \cdot \, | \Z = \z)$.

\medskip

\begin{assumpt}
    $\forall i = 1, \dots, d$, the function $F_{X_i | \Z}( \, \cdot \, | \Z = \z)$ is continuous.
\label{assumpt:continuous_cond_cdf}
\end{assumpt}

Under Assumption~\ref{assumpt:continuous_cond_cdf},
the conditional copula $C_{\X | \Z}(\, \cdot \, | \Z = \z)$
of $\X$ given $\Z = \z$ is unique and is given by
\begin{align}
    C_{\X | \Z}
    \Big(u_1, \dots, u_d \, | \, \Z = \z \Big)
    = F_{\X | \Z}
    \Big( F^-_{X_1 | \Z} (u_1 | \z), \dots,
    F^-_{X_d | \Z} (u_d | \z)
    \, \Big| \, \Z = \z \Big),
\label{eq:def:cond_copula}
\end{align}
for every $\u = (u_1, \dots, u_d) \in [0,1]^d$,
where $F^-$ represents the (generalized) inverse of a (univariate) cumulative distribution function $F$.
Note that Assumption~\ref{assumpt:continuous_cond_cdf} is not the same as assuming that all the margins $F_1, \dots, F_d$ of $\X$ are continuous.
In the following, we will assume that Assumption~\ref{assumpt:continuous_cond_cdf} is satisfied for all $\z \in \Rb^p$, except in Section~\ref{subsec:non-continuous_cond_cdf} which precisely studies what happens when Assumption~\ref{assumpt:continuous_cond_cdf} is not satisfied.

\medskip

To be precise, we denote by $\SetCopulasCond := \Fc(\Rb^p, \SetCopulas)$ the set of conditional copulas, i.e. $\SetCopulasCond$ is the set of all (measurable) functions from $\Rb^p$ to the set $\SetCopulas$ of all copulas.
It is also possible to fix a distribution $\Pb_\Z$ on $\Rb^p$ and to consider the quotient space $\SetCopulasCond/\Pb_\Z$ where equality of conditional copulas is only considered $\Pb_\Z$-almost everywhere.

\medskip

Indeed, the simplifying assumption can be interpreted in two different ways, depending on whether the mapping $\z \mapsto C_{\X | \Z = \z}$ is assumed to be constant, or only $\Pb_\Z$-almost surely constant.
This is related to the fact that the conditional joint cumulative distribution function $F_{\X | \Z}( \, \cdot \, | \Z = \z)$ is itself uniquely defined only $\Pb_\Z$-almost surely.
Of course in practice this does not make a difference, but for the theory this means that the measure of non-simplifyingness could depend on the law $\Pb_\Z$.
This could be considered as an advantage:
we can take into account potential non-uniformities of $\Pb_\Z$ since some values $\z$ may happen often more than others.
But this could also be seen as a drawback: since we need to know the true law $\Pb_\Z$ -- which is typically not the case in practice -- or rely on an estimate thereof.

\begin{example}
    To illustrate the impact, let us consider $\Zc = [-1,1]$, $d = 2$, and the conditional copula
    $C_{\X | Z}(\u | z) := \text{GaussianCopula}_{\rho = 0.8 z^2} (\u)$.
    If $Z$ is uniform on $[-1, 1]$, then the simplifying assumption (for the conditional copula $C_{\X | Z}$) is not satisfied.
    But if $Z$ is uniform on $\{-1, 1\}$ instead, then the simplifying assumption is satisfied, because $Z$ put all its mass on two points, at which the conditional copulas are identical.
    This shows that the simplifying assumption depends, not only on the conditional copula as a function $\Zc \mapsto \SetCopulas$, but also on the choice of the measure $\Pb_\Z$.
    This is reflected in the two definitions that are presented below.
\end{example}

We will now define the concept of measure of non-simplifyingness.
For this, we will need the following notation.
For $k \geq 1$, let $\mathfrak{S}_k$ be the set of permutations of $\{1, \dots, k\}$. For $\pi \in \mathfrak{S}_k$
and $\x \in \Rb^k$, we denote by $\pi(\x)$ the permuted vector 
$(x_{\pi(i)})_{i = 1, \dots, k}$.

\begin{defi}
    We say that a function $\psi: \SetCopulasCond \to [0, +\infty]$ (respectively $\psi: \SetCopulasCond/\Pb_\Z \to [0, +\infty]$) is
    \emph{a measure of non-simplifyingness}
    (respectively a $\Pb_\Z$\emph{-measure of non-simplifyingness})
    if it satisfies the following conditions:
    \begin{enumerate}
        \item[(i)] [Identification of simplified copulas]
        For every $C \in \SetCopulasCond$ 
        (respectively $\SetCopulasCond/\Pb_\Z$), we have 
        $\psi(C) = 0$ if and only if $C$ satisfies the simplifying assumption.

        \smallskip

        \item[(ii)] [Invariance by permutation of components of $\X$ and $\Z$]
        We have $\psi(C_{\pi_\X(\X) | \pi_\Z(\Z)}) = \psi(C_{\X | \Z})$.
    \end{enumerate}
    A function $\psi$ satisfying (ii) and
    \begin{enumerate}
        \item[(i')] $\psi = 0$ if the simplifying assumption is satisfied.
    \end{enumerate}
    is called a \emph{pseudo-measure of non-simplifyingness}
    (respectively a $\Pb_\Z$\emph{-pseudo-measure of non-simplifyingness}).
\label{def:measure_nonsimplifyingness}
\end{defi}

Axiom \emph{(i)} is quite straightforward, as we want the measure of non-simplifyingness to take the value $0$ if and only if the copula is indeed simplified. Sometimes this is a bit too strict (for example, for measure of non-simplifyingness based only on conditional Kendall's tau or Spearman's rho) and this gives rise to pseudo-measures of non-simplifyingness instead. This justifies the existence of Axiom \emph{(i')}.

\medskip

Axiom \emph{(ii)} is also coherent with our intuitive understanding that the conditional copula $C_{(X_1, X_2) | (Z_1, Z_2)}$ is as simplified or as non-simplified as the conditional copulas $C_{(X_2, X_1) | (Z_1, Z_2)}$ or $C_{(X_2, X_1) | (Z_2, Z_1)}$. Note that these conditional copulas are different in general because we have not assumed that the random vectors are exchangeable.

\begin{rem}
    A similar comment can be made on the structure of the set of all measures of (pseudo)-non-simplifyingness as was done in Remark~\ref{rem:structure_set_measures_non-constantness}.
    Indeed, we can see that the set of ($\Pb_\Z$-)measures of non-simplifyingness is a convex cone, and that the set of  ($\Pb_\Z$-)pseudo-measures of non-simplifyingness is a pointed convex cone.
\end{rem}

Since a measure of non-simplifyingness depends only the conditional copula, it is invariant by marginal transformations, and even by conditional marginal transformations. This is formalized in the next result, whose proof mainly relies on the invariance principle for (unconditional) copulas (see \cite[Theorem 2.4.7]{hofert2018elements}).

\begin{prop}
    Let $g_1, \dots, g_d$ be functions from $\Rb \times \Rb^p$ to $\Rb$ that are strictly increasing with respect to their first argument.
    For $i = 1, \dots, d$, let $Y_i := g(X_i, \Z)$; let $\Y := (Y_1, \dots, Y_d)$.
    Then $C_{\Y | \Z} = C_{\X | \Z}$
    and therefore $\psi(C_{\Y | \Z}) = \psi(C_{\X | \Z})$ for any pseudo-measure of non-simplifyingness $\psi$.
\end{prop}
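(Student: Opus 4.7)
The plan is to reduce the proposition to the classical invariance principle for unconditional copulas by fixing $\z$ and working inside the conditional distribution given $\Z = \z$. Concretely, I would first fix an arbitrary $\z \in \Rb^p$ and observe that, conditional on the event $\{\Z = \z\}$, the variable $Y_i = g_i(X_i, \Z)$ coincides almost surely with $g_i(X_i, \z)$, and the map $h_{i,\z}: x_i \mapsto g_i(x_i, \z)$ is, by hypothesis, strictly increasing on $\Rb$.

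Next, I would verify that Assumption~\ref{assumpt:continuous_cond_cdf} transfers from $\X$ to $\Y$. Since $h_{i,\z}$ is strictly increasing it is injective; hence for any $y \in \Rb$, the preimage $h_{i,\z}^{-1}(\{y\})$ is either empty or a singleton, so $\Pb(Y_i = y \mid \Z = \z) \le \Pb(X_i \in h_{i,\z}^{-1}(\{y\}) \mid \Z = \z) = 0$ whenever $F_{X_i|\Z}(\cdot|\z)$ is continuous. Thus the conditional marginal cdf $F_{Y_i | \Z}(\cdot | \z)$ is continuous, the conditional copula $C_{\Y | \Z}(\cdot | \Z = \z)$ is uniquely defined through~\eqref{eq:def:cond_copula}, and we may legitimately compare the two objects.

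I then apply the (unconditional) invariance principle \cite[Theorem 2.4.7]{hofert2018elements} to the conditional law of $\X$ given $\Z = \z$: since we are applying the strictly increasing transformations $h_{1,\z}, \dots, h_{d,\z}$ componentwise to $\X$, the copula of the conditional law of $\Y$ given $\Z = \z$ coincides with the copula of the conditional law of $\X$ given $\Z = \z$. In symbols,
\begin{align*}
    C_{\Y | \Z}(\, \cdot \, | \Z = \z) \;=\; C_{\X | \Z}(\, \cdot \, | \Z = \z).
\end{align*}
Since $\z$ was arbitrary, the equality holds as an identity of elements of $\SetCopulasCond = \Fc(\Rb^p, \SetCopulas)$ (and a fortiori in the quotient $\SetCopulasCond / \Pb_\Z$). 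Because any (pseudo-)measure of non-simplifyingness is by Definition~\ref{def:measure_nonsimplifyingness} a function on $\SetCopulasCond$, evaluating $\psi$ at both sides yields $\psi(C_{\Y | \Z}) = \psi(C_{\X | \Z})$.

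The only delicate point is the bookkeeping between the random function $g_i(\cdot, \Z)$ and its deterministic specialization $g_i(\cdot, \z)$ once we condition on $\{\Z = \z\}$, together with the measurability required so that the cited invariance principle applies pointwise in $\z$. Beyond this, the argument is essentially a routine reduction, and no obstacle arises from axiom (ii) of Definition~\ref{def:measure_nonsimplifyingness}, which is not even needed here since the two conditional copulas are literally equal, not merely equal up to permutation.
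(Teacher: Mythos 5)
Your proposal is correct and takes essentially the same route as the paper: fix $\z$, reduce to the unconditional invariance principle of \cite[Theorem 2.4.7]{hofert2018elements} applied to the conditional law given $\Z = \z$ (the paper phrases this via an auxiliary vector $\T \sim F_{\X|\Z=\z}$, which is only a presentational difference). Your explicit check that Assumption~\ref{assumpt:continuous_cond_cdf} transfers to $\Y$ is a welcome detail that the paper leaves implicit.
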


\begin{proof}[Proof.]
    For a $d$-dimensional cumulative distribution function $F$ with continuous margins, we denote by $\textnormal{Copula}(F)$ its copula. By extension, for a $d$-dimensional random vector $\T = (T_1, \dots, T_d)$ with continuous margins, we denote by $\textnormal{Copula}(\T)$ the copula of its joint cumulative distribution function.

    \medskip
    
    Let us fix $\u \in \Rb^d$ and $\z \in \Rb^p$.
    Let $\T = (T_1, \dots, T_d)$ be a random vector
    following the distribution $F_{\X | \Z = \z}$.
    Then
    \begin{align}
        F_{\Y | \Z = \z}(\y)
        &= \Pb(Y_1 \leq y_1, \dots, Y_d \leq y_d | \Z = \z) \nonumber \\
        &= \Pb(g(X_1, \Z) \leq y_1, \dots, g(X_d, \Z) \leq y_d | \Z = \z) 
        \nonumber \\
        &= \Pb(g(X_1, \z) \leq y_1, \dots, g(X_d, \z) \leq y_d | \Z = \z) 
        \nonumber \\
        &= \Pb(g(T_1, \z) \leq y_1, \dots, g(T_1, \z) \leq y_d).
        \label{eq:from_Y_to_T}
    \end{align}
    Then
    \begin{align*}
        C_{\X | \Z}(\u \, | \, \z)
        &= \textnormal{Copula}(F_{\X | \Z = \z})(\u) \\
        &= \textnormal{Copula}(T_1, \dots, T_d)(\u) \\
        &= \textnormal{Copula}(g(T_1, \z), \dots, g(T_d, \z))(\u) \\
        &= \textnormal{Copula}(F_{\Y | \Z = \z})(\u) \\
        &= C_{\Y | \Z}(\u \, | \, \z),
    \end{align*}
    The first and the last equalities are by definition of the conditional copula (Equation~\eqref{eq:def:cond_copula}),
    the second equality is by definition of $(T_1, \dots, T_d)$,
    the third equality is the application of the invariance principle \cite[Theorem 2.4.7]{hofert2018elements}.
    The fourth equality is a consequence of Equation~\eqref{eq:from_Y_to_T}: the cumulative distribution function of the random vector $(g(T_1, \z), \dots, g(T_d, \z))$ is $F_{\Y | \Z = \z}$.
\end{proof}

\subsection{Examples of measures and pseudo-measures of non-simplifyingness}

We now present several ways to construct measures of non-simplifyingness.
The first method is to apply the framework developed in the previous section, by recognizing that the space of conditional copulas $\SetCopulasCond$ is the space of function from $\Rb^p$ to the set $\SetCopulas$ of all copulas.

\begin{prop}
    Let $\psi$ be a measure of non-constantness on $\Fc(\Rb^p,\SetCopulas) = \SetCopulasCond$.
    We define a symmetrized version of $\psi$ by
    \begin{align*}
        \psi_{\text{sym}}(C_{\X | \Z}) := \frac{1}{d! \, p!}
        \sum_{\pi_\X \in \mathfrak{S}_d}
        \sum_{\pi_\Z \in \mathfrak{S}_p}
        \psi(C_{\pi_\X(\X) | \pi_\Z(\Z)}),
    \end{align*}
    for any $C_{\X | \Z} \in \SetCopulasCond$.
    Then $\psi_{\text{sym}}$ is a measure of non-simplifyingness.
\label{prop:measures_nonsimplifyingness-from-non-constantness}
\end{prop}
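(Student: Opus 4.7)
The plan is to verify each of the two defining axioms of a measure of non-simplifyingness (Definition~\ref{def:measure_nonsimplifyingness}) directly from the definitions. Since $\psi_{\text{sym}}$ is a non-negative, finite convex combination of values of $\psi$, and $\psi$ is itself a measure of non-constantness on $\SetCopulasCond = \Fc(\Rb^p,\SetCopulas)$, both axioms will reduce to elementary bookkeeping once one correctly identifies the effect of permuting the components of $\X$ and $\Z$ on the conditional copula viewed as a function of $\z$.

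First, I would prove axiom \emph{(i)} (identification of simplified copulas). For the easy direction, assume $C_{\X | \Z}$ satisfies the simplifying assumption, i.e.\ the map $\z \mapsto C_{\X | \Z = \z}$ is constant. For any pair of permutations $(\pi_\X, \pi_\Z)$, the map $\z \mapsto C_{\pi_\X(\X) | \pi_\Z(\Z) = \z}$ is obtained from the original constant map by a fixed permutation of the copula arguments (via $\pi_\X$) and a reparametrization of the conditioning argument (via $\pi_\Z$); in particular it is again a constant function from $\Rb^p$ to $\SetCopulas$. By axiom~\emph{(i)} of the measure of non-constantness $\psi$, each $\psi(C_{\pi_\X(\X) | \pi_\Z(\Z)})$ equals $0$, hence so does $\psi_{\text{sym}}(C_{\X | \Z})$. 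Conversely, if $\psi_{\text{sym}}(C_{\X | \Z}) = 0$, then because every term in the defining sum is non-negative, each term vanishes; in particular, taking $\pi_\X$ and $\pi_\Z$ to be the identity permutations, $\psi(C_{\X | \Z}) = 0$, and axiom~\emph{(i)} for $\psi$ then yields that $\z \mapsto C_{\X | \Z = \z}$ is constant, i.e.\ the simplifying assumption holds.

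Next, for axiom \emph{(ii)} (invariance under permutations), I would carry out a change-of-variable argument in the indexing of the double sum. Fix permutations $\pi_\X \in \mathfrak{S}_d$ and $\pi_\Z \in \mathfrak{S}_p$. Then
\begin{align*}
    \psi_{\text{sym}}(C_{\pi_\X(\X) | \pi_\Z(\Z)})
    = \frac{1}{d!\, p!} \sum_{\sigma_\X \in \mathfrak{S}_d} \sum_{\sigma_\Z \in \mathfrak{S}_p}
    \psi(C_{\sigma_\X(\pi_\X(\X)) | \sigma_\Z(\pi_\Z(\Z))}).
\end{align*}
Setting $\tau_\X := \sigma_\X \circ \pi_\X$ and $\tau_\Z := \sigma_\Z \circ \pi_\Z$, the map $(\sigma_\X, \sigma_\Z) \mapsto (\tau_\X, \tau_\Z)$ is a bijection of $\mathfrak{S}_d \times \mathfrak{S}_p$ onto itself, so reindexing gives exactly $\psi_{\text{sym}}(C_{\X | \Z})$. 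The same argument applies in the $\Pb_\Z$-quotient setting, since permuting coordinates of $\Z$ transforms $\Pb_\Z$ consistently and does not affect the quotient.

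I do not anticipate any substantive obstacle: the content is genuinely formal. The only point requiring a small amount of care is the observation in the first half of axiom~\emph{(i)} that permuting the components of $\X$ and of $\Z$ preserves the property of being a constant function $\Rb^p \to \SetCopulas$ (as opposed to merely preserving some weaker property like almost-sure constantness); this must be argued explicitly because the two sides of the identity in axiom~\emph{(ii)} of Definition~\ref{def:measure_nonsimplifyingness} live a priori in the same space $\SetCopulasCond$ only after one recognizes that $\Fc(\Rb^p,\SetCopulas)$ is closed under such reparametrizations.
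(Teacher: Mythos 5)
Your proposal is correct and follows essentially the same route as the paper: axiom \emph{(i)} is verified via the non-negativity of the terms and axiom \emph{(i)} of the measure of non-constantness $\psi$, and axiom \emph{(ii)} is obtained by the same reindexing of the double sum using the group structure of $\mathfrak{S}_d \times \mathfrak{S}_p$. Your converse direction for axiom \emph{(i)} is marginally more economical (you only need the identity-permutation term to vanish, whereas the paper runs the full chain of equivalences), but this is a cosmetic difference, not a different argument.
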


\begin{proof}[Proof.]
    A conditional copula $C_{\X | \Z}$ is simplified if and only
    $C_{\pi_\X(\X) | \pi_\Z(\Z)}$ is simplified for every permutation $\pi_\X, \pi_\Z$,
    if and only if $\psi(C_{\pi_\X(\X) | \pi_\Z(\Z)}) = 0$ for every permutation $\pi_\X, \pi_\Z$,
    if and only if $\psi_{\text{sym}}(C_{\X | \Z}) = 0$.
    This shows that $\psi_{\text{sym}}$ satisfies Axiom \emph{(i)} of Definition \ref{def:measure_nonsimplifyingness}.
    Let $(\pi_1, \pi_2) \in \mathfrak{S}_d \times \mathfrak{S}_p$.
    Then
    \begin{align*}
        \psi_{\text{sym}}(C_{\pi_1(\X) | \pi_2(\Z)}) 
        &= \frac{1}{d! \, p!}
        \sum_{\pi_\X \in \mathfrak{S}_d}
        \sum_{\pi_\Z \in \mathfrak{S}_p}
        \psi(C_{\pi_\X(\pi_1(\X)) | \pi_\Z(\pi_2(\Z))}) \\
        &= \frac{1}{d! \, p!}
        \sum_{\pi_\X \in \mathfrak{S}_d}
        \sum_{\pi_\Z \in \mathfrak{S}_p}
        \psi(C_{(\pi_\X \circ \pi_1)(\X) | (\pi_\Z \circ \pi_2)(\Z)}) \\
        &= \frac{1}{d! \, p!}
        \sum_{\pi_\X \in \mathfrak{S}_d}
        \sum_{\pi_\Z \in \mathfrak{S}_p}
        \psi(C_{\pi_\X(\X) | \pi_\Z(\Z)}) \\
        &= \psi_{\text{sym}}(C_{\X | \Z}),
    \end{align*}
    where the third equality is consequence of the fact that
    $(\mathfrak{S}_d, \circ)$ and $(\mathfrak{S}_p, \circ)$ are finite groups.
    This shows that $\psi_{\text{sym}}$ satisfies Axiom \emph{(ii)} of Definition \ref{def:measure_nonsimplifyingness}, as claimed.
\end{proof}

\medskip

Reusing the examples seen in Section~\ref{sec:measure_nonconstantness}, we obtain several measures of non-simplifyingness. Note that these measures are related to test statistics of the simplifying assumption obtained in \cite{DerumignyFermanian2017}. This is natural since the simplifying assumption is equivalent (by definition) to $\psi(C_{\X | \Z}) = 0$ for a measure of non-simplifyingness $\psi$.
There are many simple classes of such measures, for instance
\begin{align*}
    \psi
    = \| C_{\X | \Z = \cdot} - C_{\X | \Z , ave} \|
\end{align*}
for some norm $\| \cdot \|$, where an average conditional copula is given by
\begin{align*}
    C_{\X | \Z , ave}(\u)
    := \int C_{\X | \Z = \z}(\u) d\mu(\z),
\end{align*}
for any $\u \in [0, 1]^d$ and for some fixed probability measure $\mu$.
When $\mu = \Pb_\Z$, this average conditional copula becomes the partial copula, see~\cite[Proposition 1]{gijbels2015partial} and~\cite{bergsma2011nonparametric}.
More generally,
\begin{align}
    \psi
    = \| \phi(C_{\X | \Z = \cdot})
    - \phi(C_{\X | \Z , ave}) \|
\label{eq:dist-to-average-copula}
\end{align}
is a pseudo-measure of non-simplifyingness for a given mapping $\phi: \SetCopulas \to \Rb$.

\medskip

In particular, if $d = 2$, using Kendall's tau or Spearman's rho as the function $\phi$, we obtain pseudo-measures of non-simplifyingness such as
\begin{align}
    \psi
    = \| \tau_{\X | \Z = \cdot}
    - \tau_{\X | \Z , ave} \|,
\label{eq:dist-to-average-Kendall-tau}
\end{align}
or 
\begin{align}
    \psi
    = \| \rho_{\X | \Z = \cdot}
    - \rho_{\X | \Z , ave} \|,
\label{eq:dist-to-average-Spearman-rho}
\end{align}
where
$\tau_{\X | \Z , ave}$ denotes Kendall's tau of the average conditional copula $C_{\X | \Z , ave}$ and
$\rho_{\X | \Z , ave}$ denotes Spearman's rho of the average conditional copula $C_{\X | \Z , ave}$.
Alternatively, defining 
$\tau_{\X | \Z , ave} := \int \tau_{\X | \Z = \z} d\mu(z)$
and
$\rho_{\X | \Z , ave} := \int \rho_{\X | \Z = \z} d\mu(z)$
would also work, where $\tau_{\X | \Z = \z}$ and $\rho_{\X | \Z = \z}$ are
conditional Kendall's tau and conditional Spearman's rho of $\X$ given $\Z = \z$.
This can be extended to the case where $\X$ is of higher-dimension by using the matrix version of Kendall's tau or Spearman's rho.

\medskip

It is also possible to construct (pseudo-)measures of non-simplifyingness without needing averaging and the choice of a probability measure $\mu$.
Indeed, the mapping
\begin{align}
    \psi
    = \| (\u, \z, \z') \mapsto
    \phi(C_{\X | \Z = \z}(\u)) 
    - \phi(C_{\X | \Z = \z'}(\u)) \|.
\label{eq:comparison-copulas-pairwise}
\end{align}
is a measure of non-simplifyingness.
For example,
\begin{align}
    \psi
    = \sup_{(\u, \z, \z') \in [0,1]^d \times \Rb^p \times \Rb^p}
    \big| C_{\X | \Z = \z}(\u) - C_{\X | \Z = \z'}(\u) \big|,
\label{eq:comparison-copulas-pairwise-sup}
\end{align}
or
\begin{align}
    \psi
    = \Bigg( \int_{(\u, \z, \z') \in [0,1]^d \times \Rb^p \times \Rb^p}
    \big| C_{\X | \Z = \z}(\u) - C_{\X | \Z = \z'}(\u) \big|^s
    d\mu (\u, \, \z, \, \z') \Bigg)^{1/s},
\label{eq:comparison-copulas-pairwise-int}
\end{align}
for some measure $\mu$ on $[0,1]^d \times \Rb^p \times \Rb^p$ and $s \in (1, +\infty)$.
Note that these measures are more expensive to compute since they require the computation of a supremum or an integral over a potentially high-dimensional space.

\subsection{Measures of non-simplifyingness for particular sets of conditional copulas}

Often, we have information about the conditional copulas, for example by assuming a parametric or semi-parametric model.
Let us denote by $\Gscr$ a subset of the set $\SetCopulasCond$ of all conditional copulas. We say that a function $\psi: \, \Gscr \to [0, +\infty]$ is a measure of non-simplifyingness on $\Gscr$ if it satisfies Definition~\ref{def:measure_nonsimplifyingness} with $\SetCopulasCond$ replaced by $\Gscr$.

\begin{example}[Conditional copulas with densities]
    Let $\SetCopulas_{\text{dens}}$ be the set of all copulas that are absolutely continuous with respect to Lebesgue's measure.
    Let $\Gscr = \Fc(\Rb^p, \SetCopulas_{\text{dens}})$ be the set of conditional copulas $C_{\X | \Z}$ such that for all $\z \in \Rb^p$, $C_{\X | \Z = \z}$ has a (conditional) copula density $c_{\X | \Z = \z}$.
    The measures presented in the previous section can be adapted replacing conditional copulas by conditional copula densities.
    For example, one can consider
    \begin{align*}
    \psi = \| c_{\X | \Z = \cdot} - c_{\X | \Z , ave} \|
    \end{align*}
    or
    \begin{align*}
        \psi
        = \| (\u, \z, \z') \mapsto
        \phi(c_{\X | \Z = \z}(\u)) 
        - \phi(c_{\X | \Z = \z'}(\u)) \|.
    \end{align*}
\label{ex:measure-nonsimplified-densities}
\end{example}

Let $\{ C_\theta, \theta \in \Theta\}$ be a family of copulas.
Let us choose $\Gscr$ to be the set of conditional copulas of the form
$\z \in \Rb^p \mapsto C_{\theta(\z)}$, where $\theta: \, \Rb^p \to \Theta$.
We can then introduce measures of non-simplifyingness on $\Gscr$ based on a measure of non-constantness of the conditional parameter $\theta( \, \cdot \,)$, for example
\begin{align}
    \psi
    = \| \z \mapsto
    \theta(\z) - \theta_{ave} \|,
\label{eq:measure-nonsimplified-parametric}
\end{align}
for an average parameter $\theta_{ave} \in \Theta$, such as $\theta_{ave} = \int \theta(\z) \, d\mu(\z)$,
or
\begin{align}
    \psi
    = \| (\z, \z') \mapsto
    \theta(\z) - \theta(\z') \|.
\label{eq:measure-nonsimplified-parametric-pairwise}
\end{align}

Note that the parameter space $\Theta$ here needs not to be finite-dimensional.
In particular, if for every $\z \in \Zc$, the copula $C_{\X | \Z = \z}$ is the meta-elliptical copula
(see \cite{derumigny2022identifiability})
with conditional correlation matrix $\Sigma(\z)$ and conditional density generator $g_\z(\, \cdot \,)$, then a potential measure of non-simplifyingness is
\begin{align}
    \psi
    = \int \| \Sigma(\z) - \Sigma_{ave} \| d\z
    + \int \| g_\z(\, \cdot \,) - g_{ave} \| d\z,
\label{eq:measure-nonsimplified-elliptical}
\end{align}
given an average conditional correlation matrix $\Sigma_{ave}$ and an average generator $g_{ave}$, and appropriate choices of norms.
Similar definitions can be made for extreme value copulas, using a conditional version of the Pickands dependence function.

\subsection{Generalization to non-continuous conditional margins}
\label{subsec:non-continuous_cond_cdf}

Until now, we have only discussed the case where the conditional marginal distributions are all continuous (Assumption~\ref{assumpt:continuous_cond_cdf}); this ensures the uniqueness of the conditional copula $C_{\X | \Z}$ (in a $\Pb_\Z$-almost-sure sense).
We now discuss what can be done when Assumption~\ref{assumpt:continuous_cond_cdf} is no longer satisfied.
In this case, the conditional copula $C_{\X | \Z = \z}$ is uniquely determined only on $\text{Dom}_\z := \bigtimes_{i = 1}^d \textnormal{Ran} \big( F_{X_i | \Z = \z} \big)$.
Therefore, the simplifying assumption itself can be defined in several ways in this framework.

\medskip

We propose a first version of the simplifying assumption, which enforces that the conditional copulas are equal at every point $\u \in [0,1]^d$ for which both conditional copulas $C_{\X | \Z = \z}$ and $C_{\X | \Z = \z'}$ are uniquely defined.
Formally, this version of the simplifying assumption is
\begin{align*}
    \Hc_0^{\text{Dom}}: \,
    \forall \z, \z' \in \Zc^2, \,
    \forall \u \in \text{Dom}_\z \cap \text{Dom}_{\z'}, \,
    C_{\X | \Z = \z}(\u) = C_{\X | \Z = \z'}(\u).
\end{align*}

We now propose stricter generalizations of the simplifying assumption.
For every (joint) cumulative distribution function $F$, we denote by $\Cc(F)$ the set of copulas that are possible copulas of $F$.
We propose three other possible generalizations of the simplifying assumption using this concept.

\medskip

First, we could ask that the set of copulas corresponding to the distribution $F_{\X | \Z = \z}$ does not depend on $\z$. Formally, this means
\begin{align*}
    \Hc_0^{\text{equality}}: \,
    \forall \z, \z' \in \Zc^2, \,
    \Cc(F_{\X | \Z = \z}) = \Cc(F_{\X | \Z = \z'}).
\end{align*}
This may too strict to be useful. Indeed, if for some $\z$, the conditional marginal distributions of $\X | \Z = \z$ are continuous and for some other $\z'$, the conditional marginal distributions $\X | \Z = \z'$ are discrete, then $\Hc_0^{\text{equality}}$ will fail to hold.
Such phenomenon is contrary to the common intuition about copulas, which is that they should not depend or incorporate knowledge about the margins.

\medskip

Therefore, we propose a less strict version. We ask that for every two points $\z$ and $\z'$, there exists always (at least) one copula that can be the copula of $F_{\X | \Z = \z}$ and of $F_{\X | \Z = \z'}$.
Formally, this means
\begin{align*}
    \Hc_0^{\text{pairwise}}: \,
    \forall \z, \z' \in \Zc^2, \,
    \Cc(F_{\X | \Z = \z}) \cap \Cc(F_{\X | \Z = \z'}) \neq \emptyset.
\end{align*}

This intuition can be strengthen by asking that this copula is the same for every $\z$, leading to the assumption
\begin{align*}
    \Hc_0^{\text{intersection}}: \,
    \bigcap_{\z \in \Zc}
    \Cc(F_{\X | \Z = \z}) \neq \emptyset,
\end{align*}
that is, there exists a copula that works for all joint conditional cumulative distribution function $F_{\X | \Z = \z}$.
These generalizations are related together, as shown by the following result.
\begin{prop}
    The following implications hold:
    $$\Hc_0^{\text{equality}}
    \implies \Hc_0^{\text{intersection}}
    \implies \Hc_0^{\text{pairwise}}
    \implies \Hc_0^{\text{Dom}}.$$
\end{prop}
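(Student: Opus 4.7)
The plan is to establish the three implications one by one, working from the strongest hypothesis toward the weakest conclusion. All three ultimately rest on Sklar's theorem and its uniqueness part, which asserts that any copula of a joint cdf $F$ with margins $F_1, \dots, F_d$ is uniquely determined on $\bigtimes_{i=1}^d \textnormal{Ran}(F_i)$, even if it is not unique on all of $[0,1]^d$.

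For $\Hc_0^{\text{equality}} \Longrightarrow \Hc_0^{\text{intersection}}$, I would first fix any reference point $\z_0 \in \Zc$ and note that, by Sklar's theorem applied to $F_{\X|\Z=\z_0}$, the set $\Cc(F_{\X|\Z=\z_0})$ is nonempty. Under $\Hc_0^{\text{equality}}$, $\Cc(F_{\X|\Z=\z}) = \Cc(F_{\X|\Z=\z_0})$ for every $\z$, so the intersection over $\z \in \Zc$ equals $\Cc(F_{\X|\Z=\z_0}) \neq \emptyset$, which is precisely $\Hc_0^{\text{intersection}}$.

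The implication $\Hc_0^{\text{intersection}} \Longrightarrow \Hc_0^{\text{pairwise}}$ is immediate: if there exists $C \in \bigcap_{\z \in \Zc} \Cc(F_{\X|\Z=\z})$, then for any $\z, \z' \in \Zc$, $C$ belongs to $\Cc(F_{\X|\Z=\z}) \cap \Cc(F_{\X|\Z=\z'})$, so the latter is nonempty.

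The main step, and the only one that requires more than set-theoretic manipulation, is $\Hc_0^{\text{pairwise}} \Longrightarrow \Hc_0^{\text{Dom}}$. Fix $\z, \z' \in \Zc$ and pick, by hypothesis, a common copula $C \in \Cc(F_{\X|\Z=\z}) \cap \Cc(F_{\X|\Z=\z'})$. Since $C_{\X|\Z=\z}$ is (by definition of $\Cc$) also an element of $\Cc(F_{\X|\Z=\z})$, the uniqueness part of Sklar's theorem forces $C$ and $C_{\X|\Z=\z}$ to coincide on $\text{Dom}_\z = \bigtimes_{i=1}^d \textnormal{Ran}(F_{X_i|\Z=\z})$. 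The same argument with $\z'$ in place of $\z$ gives $C = C_{\X|\Z=\z'}$ on $\text{Dom}_{\z'}$. Hence, for every $\u \in \text{Dom}_\z \cap \text{Dom}_{\z'}$, $C_{\X|\Z=\z}(\u) = C(\u) = C_{\X|\Z=\z'}(\u)$, which is exactly $\Hc_0^{\text{Dom}}$. The only subtlety — and the one I would highlight — is verifying that the version of Sklar's theorem being invoked provides uniqueness on the Cartesian product of the ranges of the marginals, not just almost-surely or on the closures; this is a classical statement (see e.g. the non-continuous form of Sklar's theorem) but worth citing explicitly to justify the step.
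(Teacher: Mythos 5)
Your proof is correct. The first two implications are handled exactly as in the paper (pure set theory: equal nonempty sets have a nonempty intersection, and a nonempty global intersection forces every pairwise intersection to be nonempty). Where you genuinely differ is in the last step, $\Hc_0^{\text{pairwise}} \implies \Hc_0^{\text{Dom}}$: you pick a common copula $C \in \Cc(F_{\X | \Z = \z}) \cap \Cc(F_{\X | \Z = \z'})$ and invoke the uniqueness part of Sklar's theorem twice, identifying $C$ with $C_{\X | \Z = \z}$ on $\text{Dom}_\z$ and with $C_{\X | \Z = \z'}$ on $\text{Dom}_{\z'}$, so the two conditional copulas must agree on $\text{Dom}_\z \cap \text{Dom}_{\z'}$. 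The paper instead justifies this step by arguing that ``if the two sets of copulas are equal\dots all the copulas in $\Cc(F_{\X | \Z = \z}) = \Cc(F_{\X | \Z = \z'})$ take the same values at those points,'' which is really an argument for $\Hc_0^{\text{equality}} \implies \Hc_0^{\text{Dom}}$ rather than for the stated implication from the weaker hypothesis $\Hc_0^{\text{pairwise}}$. Your bridging-copula argument is the one that actually closes the chain as stated, and your remark that the uniqueness being used is uniqueness on $\bigtimes_{i=1}^d \textnormal{Ran}\big(F_{X_i | \Z = \z}\big)$ (not merely almost-sure or up to closure) is precisely the right point to make explicit; citing the non-continuous form of Sklar's theorem there, as you suggest, would make the proof fully self-contained.
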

\begin{proof}[Proof.]
    The implication
    $\Hc_0^{\text{equality}} \implies \Hc_0^{\text{intersection}}$
    is direct: since all sets are equal and they are non-empty, then their intersection is not empty.
    The implication
    $\Hc_0^{\text{intersection}} \implies \Hc_0^{\text{pairwise}}$
    is also direct, since a non-empty joint intersection means that all pairwise intersections are not empty.
    The last implication is due to the fact that, if the two sets of copulas are equal, then the discontinuities of the conditional margins happen at the same points, and all the copulas in $\Cc(F_{\X | \Z = \z}) = \Cc(F_{\X | \Z = \z'})$ take the same values at those points.
\end{proof}

\medskip

From all these generalizations of the simplifying assumption to the non-continuous case, a corresponding notion of ``measure of non-simplifyingness'' can be defined by adapting Definition~\ref{def:measure_nonsimplifyingness} accordingly.
We remark that the assumption $\Hc_0^{\text{intersection}}$ seems to be the one that carries the most the intuition around the original simplifying assumption.

\medskip

We propose the following measure of non-simplfyingness, corresponding to $\Hc_0^{\text{pairwise}}$:
\begin{align}
    \psi = \sup_{\z, \z' \in \Zc} \,
    \inf_{C \in \Cc(F_{\X | \Z = \z}) \rule{0pt}{0.69em}} \,
    \inf_{C' \in \Cc(F_{\X | \Z = \z'}) \rule{0pt}{0.69em} }
    \| C - C' \|.
\label{eq:measure-nonsimplified-pairwise}
\end{align}
The intuition behind this expression is that we try to find the smallest distance between possible copulas to represent each of the two conditional distribution.

\medskip

On the contrary, if we follow $\Hc_0^{\text{equality}}$, we want both sets of conditional copulas to be exactly equal, and this motivates the definition of the following measure of non-simplifyingness:
\begin{align}
    \psi = \sup_{\z, \z' \in \Zc} \,
    \sup_{C \in \Cc(F_{\X | \Z = \z}) } \,
    \sup_{C' \in \Cc(F_{\X | \Z = \z'}) }
    \| C - C' \|.
\label{eq:measure-nonsimplified-strict}
\end{align}

\section{Measures of non-simplifyingness for vines}
\label{sec:measure_nonsimplifyingness_vines}

Conditional copulas are the main building blocks of vine models, and the simplifying assumption is of particular importance there.
We refer to \cite{czado2019analyzing} and \cite{czado2022vine} for details on vine models and only present here the corresponding notation.
Formally, a vine $\Vc$ is a sequence of trees $\Tc_1, \dots, \Tc_{d-1}$ such that the edges of $\Tc_k$ become the nodes of $\Tc_{k+1}$ and satisfying the proximity condition.
We denote the node set of $\Tc_k$ by $V_k = V_k(\Vc)$ and the edge set of $\Tc_k$ by $E_k = E_k(\Vc)$.
The vine copula decomposition is the decomposition of the copula density $c_{\X}$ of a continuous random vector $\X$ as
\begin{align*}
    c_{\X}(\x) = \prod_{k = 1}^{d-1} \prod_{e \in E_k}
    c_{a_e, b_e | D_e}
    \big( F_{a_e | D_e}(x_{a_e} | \x_{D_e}) \, , \, 
    F_{b_e | D_e}(x_{b_e} | \x_{D_e})
    \, \big| \, \x_{D_e} \big).
\end{align*}
Therefore, for a given copula $c_\X$ of a random vector $\X$ and for a given vine $\Vc$, we can define the measure of non-simplifyingness of the copula $c_\X$ for the vine structure $\Vc$ by
\begin{align}
    \psi(c_{\X}, \Vc) := \sum_{k = 2}^{d-1}
    \sum_{e \in E_k} \psi(c_{a_e, b_e | D_e}).
\label{eq:vines_sum_measures}
\end{align}
Note that the sum in this measure starts at $d=2$ because the first tree of the vine decomposition is always made up of unconditional copulas; therefore there is no conditioning at these levels.
More generally, we can define a measure of non-simplifyingness of the copula $c_\X$ for the vine structure $\Vc$ by
\begin{align}
    \psi(c_{\X}, \Vc) := \Big\|
    \big(\psi(c_{a_e, b_e | D_e}) \big)_{k = 2, \dots, d-1, \, e \in E_k} \Big\|,
\label{eq:vines_norm_measures}
\end{align}
for any norm $\| \, \cdot \,\|$ on $\Rb^{\sum_{k = 2}^{d-1} \Card(E_k)}$.

\medskip

We now switch to a different goal: finding a criteria that would measure how simplified a copula is, when being decomposed by different vines.
For a dimension $d$, let $\Vscr_d$ denotes the collection of all $d$-dimensional vines.
For a given copula density $c_{\X}$, we define three non-simplifyingness scores.
\begin{itemize}
    \item Worst-case non-simplifyingness score:
    \begin{align}
        \text{WCNS}(c_{\X}) := \max_{\Vc \in \Vscr_d} \psi(c_{\X}, \Vc).
    \label{eq:WCNS}
    \end{align}

    \item Best-case non-simplifyingness score:
    \begin{align}
        \text{BCNS}(c_{\X}) := \min_{\Vc \in \Vscr_d} \psi(c_{\X}, \Vc).
    \label{eq:BCNS}
    \end{align}
    
    \item Average-case non-simplifyingness score:
    \begin{align}
        \text{ACNS}(c_{\X}) := \frac{1}{\Card (\Vscr_d)}
        \sum_{\Vc \in \Vscr_d} \psi(c_{\X}, \Vc).
    \label{eq:ACNS}
    \end{align}
\end{itemize}

\begin{example}
    The Gaussian copula is always simplified, so all these three measures are zero. But in general, they are different.
\end{example}

If a copula has a low worst-case non-simplifyingness score, then it is close to be simplified for all vines structures. Then it does not matter so much which vine structure one take.
The best-case non-simplifyingness score is a more pessimistic measure, as it tells us how much non-simplified the copula has to be whatever vine we choose.
The notion of average-case non-simplifyingness is motivated by the statistical practice: what if we choose a vine structure at random, how non-simplified would it be?

\medskip

To conclude this section, we propose several open problems related to these non-simplifyingness scores.
\begin{enumerate}
    \item For usual copula models, how different can their worst-case and best-case non-simplifyingness scores be?

    \item What is the average-case non-simplifyingness score of a typical copula?

    \item How do non-simplifyingness scores change with the dimension?

    \item Are these non-simplifyingness scores very different when replacing the set $\Vscr_d$ of all vines by particular classes of vines such as the D-vines and C-vines in the definitions above?
\end{enumerate}

\section{Estimation of measures of non-simplifyingness}
\label{sec:statistical_inference}

\subsection{Estimation of measures of non-simplifyingness for conditional copulas}

In practice, true copulas and conditional copulas are typically unknown. Therefore, the corresponding measures of non-simplifyingness are also unknown. Nonetheless, they may be important for statistical estimation: if a copula is far from being simplified, and we have enough data points, the statistician may decide to use non-simplified models. On the contrary, if the copula is barely non-simplified (as can be indicated by a low estimated measure of non-simplifyingness), then a simplified model may be good enough.

\medskip

We now assume that we have an i.i.d. dataset $(\X_i, \Z_i)$, for $i = 1, \dots, n$, following the same distribution as the random vector $(\X, \Z)$.
To estimate measures of non-simplifyingness, the easiest method is to use plug-in estimation: one start by estimating conditional copulas, then they can be substituted in the definition of the measure of non-simplifyingness to get an estimator of it.
For example, the measure of non-simplifyingness
\begin{align*}
    \psi(C_{\X | \Z})
    = \| C_{\X | \Z = \cdot} - C_{\X | \Z , ave} \|
\end{align*}
can be estimated by the plug-in estimator
\begin{align}
    \widehat\psi(C_{\X | \Z})
    = \| \widehat C_{\X | \Z = \cdot}
    - \widehat C_{\X | \Z , ave} \|,
\label{eq:estimated_measure-nonsimplified}
\end{align}
where $\widehat C_{\X | \Z = \cdot}$ and $\widehat C_{\X | \Z , ave}$ are respectively estimators of $C_{\X | \Z = \cdot}$ and $C_{\X | \Z , ave}$.
Several estimators of conditional copulas have been proposed and studied in the literature, see \cite{DerumignyFermanian2017}, Chapter 6.3 in \cite{hofert2018elements} and references therein.
For example, following~\cite{DerumignyFermanian2017},
we can use kernel-based estimators of conditional copulas, defined by
\begin{align*}
    \widehat C_{\X|\Z} (\u| \Z = \z)
    &:= \widehat F_{\X|\Z}
    \Big(
    \widehat F_{X_1|\Z}^{-}(u_1 | \Z = \z), \dots, 
    \widehat F_{X_d|\Z}^{-}(u_d | \Z = \z)
    \, \Big| \, \Z = \z \Big),
\end{align*}
where $h > 0$ is a bandwidth, $K$ is a $p$-dimensional kernel and
\begin{align*}
    \widehat F_{\X|\Z} (\x | \Z = \z)
    &:= \frac{1}{n}
    \sum_{i=1}^n K_n (\Z_i, \z)
    \ii  \X_i \leq \x), \\
    \widehat F_{X_k|J} (x | \Z = \z)
    &:= \frac{\sum_{i=1}^n K_n (\Z_i, \z)
    \ii  X_{i,k} \leq x)}{\sum_{j=1}^n K_n (\Z_i, \z)}, \\
    K_n (\Z_i, \z)
    &:= h^{-p} K \left(
    \frac{\widehat F_{Z_1}(Z_{i,1}) - \widehat F_{Z_1}(z_1)}{h},
    \dots,
    \frac{\widehat F_{Z_p}(Z_{i,p}) - \widehat F_{Z_p}(z_p) }{h} \right).
\end{align*}
The average conditional copula will be estimated by either
\begin{equation}
    \widehat C_{\X|\Z, ave}^{(3)} (\cdot)
    := \int \widehat C_{\X|\Z}(\, \cdot \, | \Z = \z)\,
    \widehat F_\Z(d\z)
    = \frac{1}{n} \sum_{i=1}^n
    \widehat C_{\X|\Z}(\, \cdot \, | \Z = \Z_i).
    \label{estimator_Cs3}
\end{equation}
or
\begin{equation}
    \widehat C_{\X|\Z, ave}^{(4)} (\u_I)
    := \frac{1}{n} \sum_{i=1}^n
    \1 \left(
    \widehat F_{X_1|\Z}(X_{i,1} | \Z_i) \leq u_1 , \dots, 
    \widehat F_{X_d|\Z}(X_{i,d} | \Z_i) \leq u_p \right),
    \label{estimator_Cs4}
\end{equation}

In general, this will give strongly consistent estimators of the measures of non-simplifyingness under weak conditions. The derivation of the consistency in a particular case is detailed in Section~\ref{subsec:ex_CKT}.

\subsection{Estimation of measures of non-simplifyingness based on conditional Kendall's tau}
\label{subsec:ex_CKT}

Following~\cite{derumigny2019kernel}, the conditional Kendall's tau $\tau_{1,2|\Z = \z}$ between $X_1$ and $X_2$ can be estimated by
\begin{align*}
    \widehat\tau_{1,2|\Z = \z}
    := \frac{\sum_{i,j = 1}^n w_{i,n}(\z) w_{j,n}(\z)
    \sign \big((X_{i,1} - X_{j,1})
    (X_{i,2} - X_{j,2}) \big)}{
    1 - \sum_{i=1}^n w_{i,n}^2(\z)} 
\end{align*}
where
$\sign(x) := \1_{\{x > 0\}} - \1_{\{x < 0\}}$,
and $w_{i,n}(\z) := K_h(\Z_i-\z) / \sum_{j=1}^n K_h(\Z_j-\z)$,
with $K_h(\cdot):= h^{-p} K(\cdot/h)$ for some kernel $K$ on $\Rb^p$, and $h=h(n)$ denotes a bandwidth sequence that tends to zero when $n\to\infty$.
There are other estimators of conditional Kendall's tau: \cite{derumigny2020kendall} proposed to use a parametric regression-type model which allows for faster rates of convergence, and \cite{derumigny2019classification} uses machine learning techniques.

\medskip

Let $\Zc$ to be a compact subset of $\Rb^p$ on which the density of $\Z$ is lower bounded by a positive constant.
Then by Theorem 8 of~\cite{derumigny2019kernel}, under some regularity conditions on the kernel $K$ and the joint distribution of $(\X, \Z)$, if $n h_{n}^{2p} / \log n \to \infty$,
then $\sup_{\z \in \Zc}
\big| \widehat\tau_{1,2|\Z = \z} - \tau_{1,2|\Z = \z} \big| \to 0$ almost surely.
Note that, by the triangular inequality, we have
\begin{align*}
    \sup_{\z, \z' \in \Zc}
    \big| \widehat\tau_{1,2|\Z = \z}
    - \widehat\tau_{1,2|\Z = \z'} \big|
    &\leq \sup_{\z, \z' \in \Zc}
    \big| \tau_{1,2|\Z = \z}
    - \tau_{1,2|\Z = \z'} \big| \\
    &+ \sup_{\z, \z' \in \Zc}
    \big| \widehat\tau_{1,2|\Z = \z}
    - \widehat\tau_{1,2|\Z = \z'} 
    - \tau_{1,2|\Z = \z}
    + \tau_{1,2|\Z = \z'} \big|.
\end{align*}
Therefore,
\begin{align*}
    \sup_{\z, \z' \in \Zc}
    \big| \widehat\tau_{1,2|\Z = \z}
    - \widehat\tau_{1,2|\Z = \z'} \big|
    &- \sup_{\z, \z' \in \Zc}
    \big| \tau_{1,2|\Z = \z}
    - \tau_{1,2|\Z = \z'} \big| \\
    &\leq \sup_{\z, \z' \in \Zc}
    \big| \widehat\tau_{1,2|\Z = \z}
    - \widehat\tau_{1,2|\Z = \z'} 
    - \tau_{1,2|\Z = \z}
    + \tau_{1,2|\Z = \z'} \big| \\
    &\leq 2 \sup_{\z \in \Zc}
    \big| \widehat\tau_{1,2|\Z = \z}
    - \tau_{1,2|\Z = \z} \big|,
\end{align*}
by the triangular inequality.
By interchanging $\widehat\tau$ and $\tau$, we obtain that
\begin{align*}
    &\left|
    \sup_{\z, \z' \in \Zc}
    \big| \widehat\tau_{1,2|\Z = \z}
    - \widehat\tau_{1,2|\Z = \z'} \big|
    - \sup_{\z, \z' \in \Zc}
    \big| \tau_{1,2|\Z = \z}
    - \tau_{1,2|\Z = \z'} \big|
    \right|
    \leq 2 \sup_{\z \in \Zc}
    \big| \widehat\tau_{1,2|\Z = \z}
    - \tau_{1,2|\Z = \z} \big|,
\end{align*}
which tends almost surely to $0$.

\medskip

Therefore, we have shown that
\begin{align}
    \widehat\psi
    = \sup_{\z, \z' \in \Zc}
    \big| \widehat\tau_{1,2|\Z = \z}
    - \widehat\tau_{1,2|\Z = \z'} \big|
\label{eq:estimated_measure-CKT-pairwise}
\end{align}
is a strongly consistent estimator of
\begin{align*}
    \psi = \sup_{\z, \z' \in \Zc}
    \big| \tau_{1,2|\Z = \z}
    - \tau_{1,2|\Z = \z'} \big|
\end{align*}
in this setting.

\medskip

In the same way, given a finite set of design points $\z_1, \dots, \z_{n'}$, one can prove that
\begin{align}
    \widehat\psi = \sup_{i, j = 1, \dots, n'}
    \big| \widehat\tau_{1,2|\Z = \z_i}
    - \widehat\tau_{1,2|\Z = \z_j} \big|
\label{eq:estimated_measure-CKT-pairwise-finite}
\end{align}
is a strongly consistent estimator of
\begin{align*}
    \psi = \sup_{i, j = 1, \dots, n'}
    \big| \tau_{1,2|\Z = \z_i}
    - \tau_{1,2|\Z = \z_j} \big|.
\end{align*}
Replacing supremum by sums, we can observe that the same result holds for the sum-type pseudo-measure of non-simplifyingness:
\begin{align}
    \widehat\psi = \sum_{i, j = 1, \dots, n'}
    \big| \widehat\tau_{1,2|\Z = \z_i}
    - \widehat\tau_{1,2|\Z = \z_j} \big|
\label{eq:estimated_measure-CKT-pairwise-finite-sum}
\end{align}
is a strongly consistent estimator of
\begin{align*}
    \psi = \sum_{i, j = 1, \dots, n'}
    \big| \tau_{1,2|\Z = \z_i}
    - \tau_{1,2|\Z = \z_j} \big|.
\end{align*}

\section{Simulation study}
\label{sec:simulation_study}

In this section, we do a small simulation study to evaluate the finite-distance statistical properties of estimators of a few measures of non-simplifyingness.
For simplicity, we fix the sample size to $n = 2000$, $d = 2$, $p = 1$, and focus on non-parametric measures of non-simplifyingness.
Here are the measures that we study:
\begin{itemize}
    \item $\psi_{1, CvM} :=
    \left( \int_{\u, z} |C_{\X | Z = z}(\u)
    - C_{\X | Z , ave}(\u)|^2 d\u dz \right)^{1/2}$;
    
    \item $\psi_{1, KS} :=
    \sup_{\u, z} |C_{\X | Z = z}(\u)
    - C_{\X | Z , ave}(\u)|$;
    
    \item $\tilde \psi_{0, CvM} :=
    \left( \int_{\u, z, z'} |C_{\X | Z = z}(\u)
    - C_{\X | Z = z'}(\u)|^2 d\u dz dz' \right)^{1/2}$;
    
    \item $\tilde \psi_{0, KS} :=
    \sup_{\u, z, z'} |C_{\X | Z = z}(\u)
    - C_{\X | Z = z'}(\u)|$.
\end{itemize}
In all cases, $Z$ is generated following the uniform distribution on $[0, 1]$, the conditional margins of $X_1$ and $X_2$ given $Z = z$ are uniform, and the conditional copula of $(X_1, X_2)$ given $Z = z$ is one of the three choices:
\begin{itemize}
    \item ``\texttt{indep}'': $C_{\X | Z = z}$ is the independence copula;

    \item ``\texttt{gauss\_0\_5}'': $C_{\X | Z = z}$ is the Gaussian copula with correlation $\rho = 0.5$;

    \item ``\texttt{gauss\_0.8z}'': $C_{\X | Z = z}$ is the Gaussian copula with correlation $\rho(z) = 0.8 \times z$.
\end{itemize}

Note that the first two satisfy the simplifying assumption, so their measures of non-simplifyingness are $0$. Actually, for the first case, we even have conditional independence between $X_1$ and $X_2$ given $Z$, as a special case of the simplifying assumption.
The third case does not satisfied the simplifying assumption.
The corresponding measures of non-simplifyingness can then be numerically computed (either by numerical integration, in case of the Cramer-von Mises-type with the $L_2$-norm -- or by numerical optimization, in the case of Kolmogorov-Smirnov-type with the $L_\infty$-norm):
$\psi_{1, CvM} = 0.02383572$;
$\psi_{1, KS} = 0.0680061461$;
$\tilde \psi_{0, CvM} = 0.03194286$;
$\tilde \psi_{0, KS} = 0.1475836177$.

\medskip

We estimate the measures of non-simplifyingness following the framework detailed in the previous section.
These estimation procedures are all available in the \texttt{R} package \texttt{CondCopulas} \cite{derumignyCondCopulas}.
Code to reproduce these experiments is available at the address \url{https://github.com/AlexisDerumigny/Reproducibility-2025-Measures-NonSimplifyingness}.
For $50$ simulations of each model, we estimate the measures of non-simplifyingness for several choices of the bandwidth parameter $h$.
We also have two possibilities of estimating $\psi_{1, CvM}$ and $\psi_{1, KS}$ depending on which estimator of $C_{\X | Z , ave}$ is used (``\texttt{Cs\_3}'' for $\widehat C_{\X|\Z, ave}^{(3)}$, defined in \eqref{estimator_Cs3}, and ``\texttt{Cs\_4}'' for $\widehat C_{\X|\Z, ave}^{(4)}$, defined in \eqref{estimator_Cs4}).
The results are given in Figure~\ref{fig:plot_measure_as_function_h}.

\begin{figure}[htbp]
    \centering
    \includegraphics[width=0.9\linewidth]{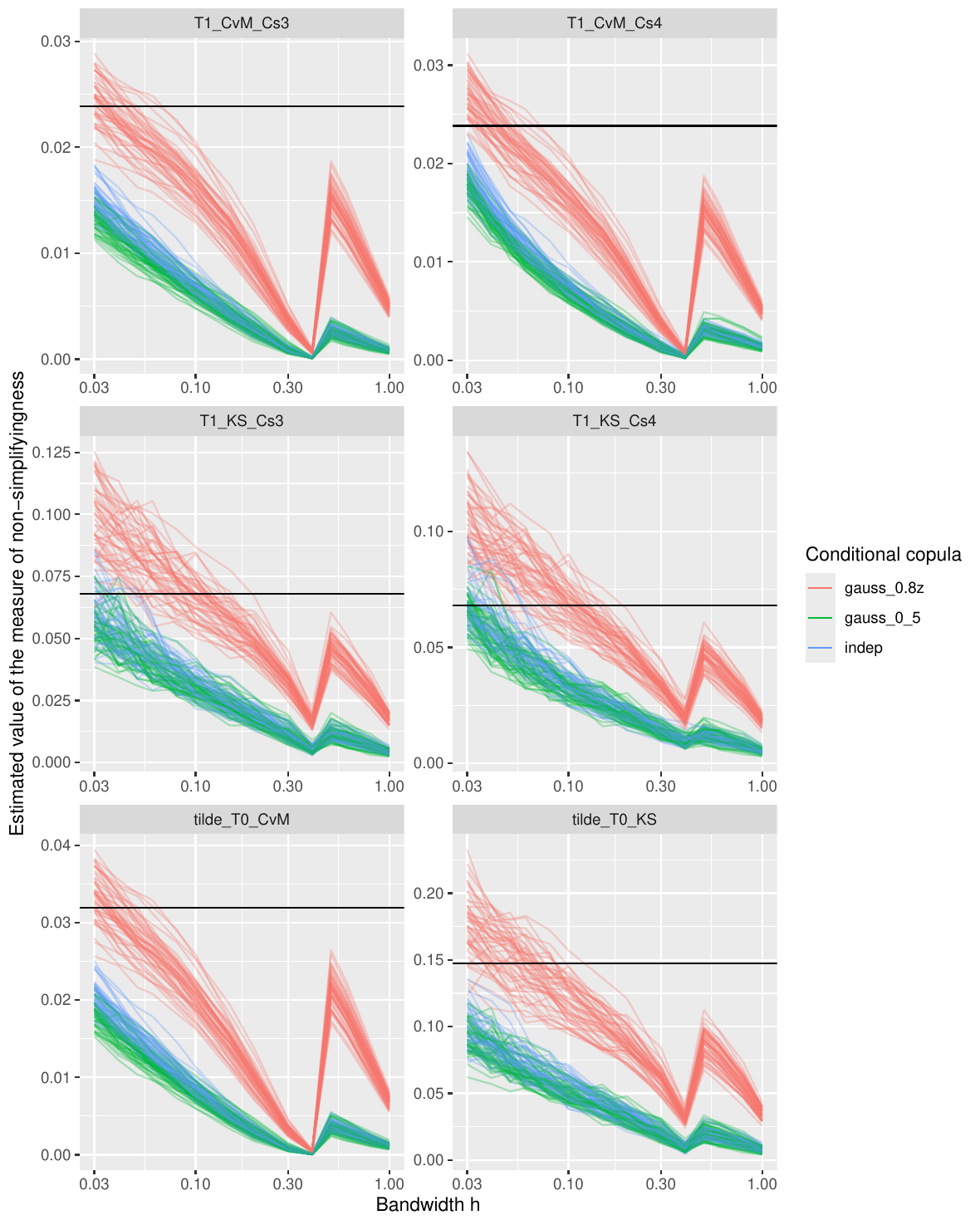}
    \caption{Estimated value of different measures of non-simplifyingness as a function of the bandwidth $h$ for different simulations of 3 data-generating processes. The true value of each measure of non-simplifyingness in the non-simplified case \texttt{gauss\_0.8z} is given by the corresponding horizontal black line.}
    \label{fig:plot_measure_as_function_h}
\end{figure}

\medskip

Overall, it appears that the estimated measures of non-simplifyingness are mostly decreasing with $h$ (for not too large bandwidths), which is to be expected since the smoothing reduces the differences between the conditional copulas when conditioning by different values $\z$.
For the two simplified cases, the estimated measures of non-simplifyingness are quite positively biased (which is to be expected given that they are positive and that the true value is $0$).
Nevertheless, they are smaller than in the non-simplified case, which is reassuring.

\medskip

The type of estimation of $C_{\X | Z , ave}$ does not seem to have a particular influence, but the choice of the norm ($L_2$ compared to $L_\infty$) seems to matter: it looks like better differences are obtained in the $L_2$ case. The curves are also smoother, which is typical of integration compared to taking a supremum.
Interesting, for different measures of non-simplifyingness, the optimal value of the smoothing parameter $h$ seems to differ.


\medskip

\bmhead{Acknowledgements}


The author thanks Claudia Czado, Jean-David Fermanian, Dorota Kurowicka and Thomas Nagler for discussions about this subject.
The author thanks Roger Cooke for mentioning Example~\ref{example:measure_nonconstantness_derivatives} when discussing a draft of this article.
The author thanks an anonymous referee for suggesting Appendix~\ref{sec:list_new_measures}, among other valuable comments that helped to improve the manuscript.

\begin{appendices}




\section{List of new measures}
\label{sec:list_new_measures}

\subsection{Measures and pseudo-measures of non-constantness}

{
\renewcommand{\arraystretch}{1.5}

\begin{tabular}{|l|l|}
    \hline
    Description of the measure & Reference \\
    \hline\hline
    Pseudo-measure of non-constantness induced by a pseudo-norm &
    Example~\ref{ex:measure-non-const_pseudo-norm}
    \\
    \hline
    Discrete measure of non-constantness &
    Example~\ref{ex:measure-non-const_trivial}
    \\
    \hline
    Kolmogorov-Smirnov pseudo-measure of non-constantness &
    Example~\ref{ex:measure-non-const_Kolmogorov-Smirnov}
    \\
    \hline
    Integral-type pseudo-measures of non-constantness &
    Example~\ref{ex:measure-non-const_integral-type}
    \\
    \hline
    Pseudo-measures of non-constantness from averaging &
    Example~\ref{ex:measure-non-const_from-averaging}
    \\
    \hline
    Pseudo-measures of non-constantness from conic combinations &
    Remark~\ref{rem:structure_set_measures_non-constantness}
    \\
    \hline
    Measures of non-constantness from derivatives &
    Example~\ref{example:measure_nonconstantness_derivatives}
    \\
    \hline
\end{tabular}
}

\subsection{Measures and pseudo-measures of non-simplifyingness for conditional copulas}

{
\renewcommand{\arraystretch}{1.5}

\begin{tabular}{|p{0.72\textwidth}|p{0.2\textwidth}|}
    \hline
    Description of the measure & Reference \\
    \hline\hline
    Measure of non-simplifyingness from measure of non-constantness &
    Proposition~\ref{prop:measures_nonsimplifyingness-from-non-constantness}
    \\
    \hline
    Measure of non-simplifyingness based on the distance to the average copula & 
    Equation~\eqref{eq:dist-to-average-copula}
    \\
    \hline
    Measure of non-simplifyingness based on the distance to the average Kendall's tau & 
    Equation~\eqref{eq:dist-to-average-Kendall-tau}
    \\
    \hline
    Measure of non-simplifyingness based on the distance to the average Spearman's rho & 
    Equation~\eqref{eq:dist-to-average-Spearman-rho}
    \\
    \hline
    Measure of non-simplifyingness based on general pairwise comparisons of conditional copulas & 
    Equation~\eqref{eq:comparison-copulas-pairwise}
    \\
    \hline
    Measure of non-simplifyingness based on pairwise comparisons of conditional copulas with the $L_\infty$-norm & 
    Equation~\eqref{eq:comparison-copulas-pairwise-sup}
    \\
    \hline
    Measure of non-simplifyingness based on integrated-type pairwise comparisons of conditional copulas & 
    Equation~\eqref{eq:comparison-copulas-pairwise-int}
    \\
    \hline
    Measure of non-simplifyingness based on a parametrized model & 
    Equations~\eqref{eq:measure-nonsimplified-parametric} and~\eqref{eq:measure-nonsimplified-parametric-pairwise}
    \\
    \hline
    Measure of non-simplifyingness for meta-elliptical copulas & 
    Equation~\eqref{eq:measure-nonsimplified-elliptical}
    \\
    \hline
    Measure of non-simplifyingness with non-continuous marginsfor meta-elliptical copulas & 
    Equations~\eqref{eq:measure-nonsimplified-pairwise} and~\eqref{eq:measure-nonsimplified-strict} 
    \\
    \hline
\end{tabular}
}

\bigskip

\subsection{Measures of non-simplifyingness for vines}

{
\renewcommand{\arraystretch}{1.5}

\begin{tabular}{|p{0.72\textwidth}|p{0.2\textwidth}|}
    \hline
    Description of the measure & Reference \\
    \hline\hline
    Measure of non-simplifyingness of a copula for a given vine structure &
    Equations~\eqref{eq:vines_sum_measures} and~\eqref{eq:vines_norm_measures} 
    \\
    \hline
    Worst-case non-simplifyingness scores of a copula &
    Equation~\eqref{eq:WCNS}
    \\
    \hline
    Best-case non-simplifyingness scores of a copula &
    Equation~\eqref{eq:BCNS}
    \\
    \hline
    Average-case non-simplifyingness scores of a copula &
    Equation~\eqref{eq:ACNS} 
    \\
    \hline
\end{tabular}
}

\subsection{Estimated measures of non-simplifyingness}

{
\renewcommand{\arraystretch}{1.5}

\begin{tabular}{|p{0.72\textwidth}|p{0.2\textwidth}|}
    \hline
    Description of the measure & Reference \\
    \hline\hline
    Estimated measure obtained by plug-in of the conditional copulas &
    Equation~\eqref{eq:estimated_measure-nonsimplified}
    \\
    \hline
    Estimated measure obtained by plug-in of the conditional Kendall's tau &
    Equation~\eqref{eq:estimated_measure-CKT-pairwise}
    \\
    \hline
    Estimated measure obtained by plug-in of a finite number of the conditional Kendall's tau &
    Equations~\eqref{eq:estimated_measure-CKT-pairwise-finite}
    and~\eqref{eq:estimated_measure-CKT-pairwise-finite-sum}
    \\
    \hline
\end{tabular}
}



\end{appendices}

\bigskip


\bibliography{mainV3}

\end{document}